\newcommand{\bbfont}{\mathbbm}
\newcommand{\CC}{{\bbfont C}}
\newcommand{\RR}{{\bbfont R}}
\newcommand{\ZZ}{{\bbfont Z}}
\newcommand{\upC}{{\mathrm{C}}}
\newcommand{\upL}{{\mathrm{L}}}
\newcommand{\norm}[1]{{\lVert #1 \rVert}}
\newcommand{\angles}[1]{{\langle #1\rangle}}
\newcommand{\braces}[1]{{\{ #1\}}}
\newcommand{\bigpars}[1]{{\big( #1\big)}}
\newcommand{\set}[1]{\braces{\,#1\,}}
\newcommand{\cont}{{\upC}}
\newcommand{\Calgebra}{\ensuremath{{\upC}^\ast}\!-algebra}
\newcommand{\Calgebras}{\ensuremath{{\upC}^\ast}\!-algebras}
\newcommand{\idmap}{{\mathrm{id}}}
\newcommand{\spanlc}[1][]{\operatorname{span_{#1}}} 
\theoremstyle{plain}
\newtheorem{theorem}{Theorem}[section]
\newtheorem{proposition}[theorem]{Proposition}
\newtheorem{lemma}[theorem]{Lemma}
\newtheorem*{theorem*}{Theorem}
\newtheorem*{proposition*}{Proposition}
\newtheorem*{lemma*}{Lemma}
\newtheorem*{corollary*}{Corollary}
\newtheorem*{conjecture*}{Conjecture}
\theoremstyle{definition}
\newtheorem{definition}[theorem]{Definition}
\newtheorem{remark}[theorem]{Remark}
\newtheorem*{definition*}{Definition}
\newtheorem*{example*}{Example}
\newtheorem*{remark*}{Remark}
\newtheorem*{assumption*}{Assumption}
\setlist[enumerate,1]{label=\textup{(\arabic*)},ref=\arabic*}
\setlist[enumerate,2]{label=\textup{(\alph*)},ref=\arabic{enumi}.\alph*}
\setlist[enumerate,3]{label=\textup{(\roman*)},ref=\arabic{enumi}.\alph{enumii}.\roman*}
\setlist[enumerate,4]{label=\textup{(\Alph*)},ref=\arabic{enumi}.\alph{enumii}.\roman{enumiii}.\Alph*}
\crefname{theorem}{Theorem}{Theorems}
\crefname{proposition}{Proposition}{Propositions}
\crefname{lemma}{Lemma}{Lemmas}
\crefname{corollary}{Corollary}{Corollaries}
\crefname{conjecture}{Conjecture}{Conjectures}
\crefname{definition}{Definition}{Definitions}
\crefname{example}{Example}{Examples}
\crefname{remark}{Remark}{Remarks}
\crefname{assumption}{Assumption}{Assumptions}
\crefname{equation}{equation}{equations}
\crefname{enumi}{part}{parts}
\crefname{enumii}{part}{parts}
\crefname{enumiii}{part}{parts}
\crefname{enumiv}{part}{parts}
\numberwithin{equation}{section}
\newcommand{\Aut}{\mathrm{Aut}}
\newcommand{\dynsys}{(A,G,\alpha)}
\newcommand{\lonealgebra}{\ell^1(G,A;\alpha)}
\newcommand{\Lonealgebra}{\upL^1(G,A;\alpha)}
\newcommand{\Cstar}{\ensuremath{{\upC}^\ast}\!}
\newcommand{\laction}[2]{{\upL}_{#2}^{#1}}
\newcommand{\mean}{m}
\newcommand{\linf}[1]{\ell^\infty(#1)}
\newcommand{\fspaceletter}{\Phi}
\newcommand{\fspace}[2]{{\fspaceletter}_{#2}^{#1}}
\newcommand{\fletter}{\varphi}
\newcommand{\f}{\fletter}
\newcommand{\phifunction}[2]{\fletter_{#2}^{#1}}
\newcommand{\onefunction}[1]{{\mathbf 1}_{#1}}
\begin{document}


\title[Amenable crossed product Banach algebras] {Amenable crossed product Banach algebras associated with a class of ${\mathbf {C}}^{\mathbf{\ast}}$-dynamical systems. II}

\author{Marcel de Jeu}
\address{Mathematical Institute\\
Leiden University\\
P.O.\ Box 9512\\
2300 RA Leiden\\
the Netherlands}
\email{mdejeu@math.leidenuniv.nl}

\author{Rachid El Harti}
\address{Department of Mathematics and Computer Sciences\\
Faculty of Sciences and Techniques\\
University Hassan I, BP 577 Settat\\
Morocco}
\email{rachid.elharti@uhp.ac.ma
}

\author{Paulo R.\ Pinto}
\address{Department of Mathematics\\
Instituto Superior T\'{e}cnico\\
University of Lisbon\\
Av.\ Rovisco Pais 1\\
1049-001 Lisbon\\
Portugal}
\email{ppinto@math.tecnico.ulisboa.pt}




\subjclass[2010]{Primary 47L65; Secondary 43A07, 46H25, 46L55}

\keywords{Crossed product Banach algebra, $\mathrm{C}^\ast$-dynamical system, amenable Banach algebra, amenable group, strongly amenable ${\mathrm C}^\ast$-algebra.}



\begin{abstract}We prove that the crossed product Banach algebra  $\ell^1(G,A;\alpha)$ that is associated with a ${\mathrm C}^\ast$-dynamical system $(A,G,\alpha)$ is amenable if~$G$ is a discrete amenable group and~$A$ is a strongly amenable ${\mathrm C}^\ast$-algebra. This is a consequence of the combination of a more general result with Paterson's characterisation of strongly amenable unital $\mathrm{C}^\ast$-algebras in terms of invariant means for their unitary groups.
\end{abstract}

\maketitle


\section{Introduction and overview}\label{sec:introduction_and_overview}

If $\dynsys$ is a \Cstar-dynamical system, where~$A$ is a nuclear \Calgebra\ and~$G$ is an amenable locally compact Hausdorff topological group, then the crossed product \Calgebra\ $A\rtimes_\alpha G$ is a nuclear \Calgebra; see e.g.~\cite{gootman:1982},  \cite[Proposition~14]{green:1977}, or \cite[Theorem~7.18]{williams_CROSSED_PRODUCTS_OF_C-STAR-ALGEBRAS:2007}. Using Connes' work in \cite{connes:1978} and Haagerup's in \cite{haagerup:1983}, one can equivalently say that $A\rtimes_\alpha G$ is an amenable  Banach algebra if~$G$ is an amenable locally compact Hausdorff topological group and~$A$ is an amenable \Calgebra. Here a Banach algebra~$A$ is called amenable if every bounded derivation of~$A$ with values in a dual Banach $A$-bimodule is inner, and a topological group~$G$ is called amenable if there exists a left invariant state on the unital \Calgebra\ of bounded right uniformly continuous complex valued functions on~$G$. For a locally compact Hausdorff topological group~$G$, this is equivalent to the existence of left invariant states on other unital \Calgebras\ of (equivalence classes of) functions on~$G$; see e.g.\ \cite[Definition~1.1.4, Theorem~1.1.9, and Theorem~1.1.11]{runde_LECTURES_ON_AMENABILITY:2002}. If~$G$ is discrete, amenability is simply the existence of a left invariant state on $\linf{G}$. 

The \Calgebra\ $A\rtimes_\alpha G$ is the enveloping \Calgebra\ of the twisted convolution algebra $\Lonealgebra$. In view of the above reformulation of the nuclearity result in terms of amenability, it seems natural to inquire whether $\Lonealgebra$ is perhaps also amenable under the same conditions on~$A$ and~$G$. Apart from its intrinsic interest, this would also provide an alternative approach to the nuclearity of $A\rtimes_\alpha G$. Indeed, since the inclusion of the latter in the former is continuous (even contractive) with dense image, we could then use \cite[Proposition~2.3.1]{runde_LECTURES_ON_AMENABILITY:2002} to conclude that $A\rtimes_\alpha G$ is amenable, and therefore nuclear.

Not much seems to be known about the amenability of $\Lonealgebra$, or, for that matter, of other Banach algebras of $\upL^1$-type at all. There is, of course, Johnson's result  for $A=\CC$: if~$G$ is an amenable locally compact Hausdorff topological group, then $\upL^1(G)$ is amenable.  See e.g\ \cite[Theorem~2.5]{johnson_COHOMOLOGY_IN_BANACH_ALGEBRAS:1972} for this and its converse; the latter is due to Ringrose. If~$G$ is discrete, then a little  more is known. The algebra $\lonealgebra$ (we shall give its definition in Section~\ref{sec:preliminaries}) is amenable if~$A$ is a commutative or finite dimensional \Calgebra\ (see \cite[Theorem~2.4]{de_jeu_el_harti_pinto:2017b}); it is unknown whether there is a converse of some kind involving properties of~$G$.  For general~$G$ and (Banach) algebra~$A$, one can introduce a weight $\omega:G\to\RR_{\geq 0}$ and arrive at a generalised Beurling algebra $\upL^1(G,A,\omega;\alpha)$ as in \cite[Definition~5.4]{de_jeu_messerschmidt_wortel_UNPUBLISHED:2013}. For $G=\ZZ$ and $A=\CC$, it is known (see \cite[Theorem~2.4]{bade_curtis_dales:1987}) that for certain weights the ensuing Beurling algebras are amenable (or weakly amenable), whereas for others they are not. The authors are not aware of any other results on amenability for $\upL^1$-type Banach algebras.

In this paper, we show that $\lonealgebra$ is an amenable Banach algebra if~$G$ is a discrete amenable group and~$A$ is a strongly amenable not necessarily unital \Calgebra\ (see \cref{res:lonealgebra_is_amenable_if_A_is_strongly_amenable}). The latter notion has been introduced by Johnson (see \cite[p.70]{johnson_COHOMOLOGY_IN_BANACH_ALGEBRAS:1972} or \cite[p.~313]{haagerup:1983}): a unital \Calgebra\ with unitary group~$U$ is said to be strongly amenable if, for every bounded derivation~$D$ of~$A$ with values in a dual Banach~$A$-bimodule~$E^\ast$, there exists $x^\ast$ in the weak*-closed convex hull of $\set{-Du\cdot u^{-1} : u\in U}$ in~$E^\ast$ such that $Da=a\cdot x - x\cdot a$ for all $a\in A$. A non-unital \Calgebra\ is said to be strongly amenable if its unitisation is. Every strongly amenable Banach algebra is amenable, but the converse does not hold, as is shown by the Cuntz algebras ${\mathcal O}_n$ for $n\geq 2$; see~\cite{rosenberg:1977}. All Type I \Calgebras\ (equivalently: all postliminal \Calgebras; see \cite[Remark~9.5.9]{dixmier_C-STAR-ALGEBRAS_ENGLISH_NORTH_HOLLAND_EDITION:1977})  are strongly amenable; see \cite[Theorem~7.9]{johnson_COHOMOLOGY_IN_BANACH_ALGEBRAS:1972}. Thus our present result covers a reasonably wide class of examples, and, in particular,  it implies our previous result that $\lonealgebra$ is amenable if~$A$ is a commutative or finite dimensional \Calgebra.

We shall now explain the structure of the proof, which will also make clear how strong amenability of~$A$ enters the picture in a natural way, replacing the weaker requirement of amenability that was the initial Ansatz in the above discussion. Actually, it will become clear which (presumably) weaker condition than strong amenability is sufficient for $\lonealgebra$ to be amenable.

Let us sketch how one could attempt (and fail) to prove\textemdash along the lines of \cite{de_jeu_el_harti_pinto:2017b}\textemdash that $\lonealgebra$ is an amenable Banach algebra if~$G$ is a discrete amenable group and~$A$ is only known to be an amenable \Calgebra. First of all, it follows from \cite[Corollary~2.3.11]{runde_LECTURES_ON_AMENABILITY:2002} and \cite[Lemma~2.2]{de_jeu_el_harti_pinto:2017b} that it is equivalent to attempt this with~$A$ also unital, so let us assume this. In that case (we refer to \cref{res:lonealgebra_contains_semidirect_product} for details), $\lonealgebra$ contains a group~$H$ that is generated as an abstract group by~$G$ and the unitary group~$U$ of~$A$, and that has~$U$ as a normal subgroup. In fact, the group~$H$ is isomorphic to $U\rtimes_\alpha G$ as abstract groups, but we shall not need this more precise statement. The important point is that the closed linear span of~$H$ equals $\lonealgebra$. Therefore, if~$D$ is a bounded derivation of $\lonealgebra$ with values in a dual $\lonealgebra$-bimodule~$E^\ast$, and if we want to show that~$D$ is inner, we need only prove that its restriction to~$H$ is inner. This observation is already used in \cite{de_jeu_el_harti_pinto:2017b}. In~\cite{de_jeu_el_harti_pinto:2017b}, the proof then proceeds by supplying~$U$ with the inherited norm topology of~$\lonealgebra$ if~$A$ is finite dimensional, or with the discrete topology if~$A$ is commutative. Then~$U$ is an amenable locally compact Hausdorff topological group in both cases, and a known stability property for such groups (see \cite[Proposition~13.4]{pier_AMENABLE_LOCALLY_COMPACT_GROUPS:1984}) then implies that~$H$ is an amenable locally compact Hausdorff topological group as well. Consequently, \cite[Theorem~11.8.(ii)]{pier_AMENABLE_LOCALLY_COMPACT_GROUPS:1984} (see also \cite[p.~17--18 and p.~99]{pier_AMENABLE_LOCALLY_COMPACT_GROUPS:1984}) shows that~$D$ is inner on~$H$. This concludes the proof in \cite{de_jeu_el_harti_pinto:2017b}. Inspection shows that the proof of the result on innerness that is invoked (i.e.\ of \cite[Theorem~11.8.(ii)]{pier_AMENABLE_LOCALLY_COMPACT_GROUPS:1984}) is ultimately based (see \cite[proof of Lemma~11.6]{pier_AMENABLE_LOCALLY_COMPACT_GROUPS:1984}) on Johnson's archetypical argument (see \cite[p.~33]{johnson_COHOMOLOGY_IN_BANACH_ALGEBRAS:1972}) to show that, in a suitable context, bounded derivations of a group with values in dual Banach bimodules are inner if there exists an appropriate left invariant mean on the group. The same is thus true for our earlier result \cite[Theorem~2.4]{de_jeu_el_harti_pinto:2017b}: surviving all layers if~$A$ is commutative or finite dimensional, it is this argument that provides the key to the amenability of $\lonealgebra$ in \cite{de_jeu_el_harti_pinto:2017b}.

For general amenable~$A$, the natural follow-up after the observation that one need only prove that~$D$ is inner on~$H$ does not seem to work. We shall now explain this.

For a general unital \Calgebra~$A$, it has been established by Paterson (see \cite[Theorem~2]{paterson:1992}) that its unitary group~$U$ is a Hausdorff topological group in the weak topology that it inherits from the Banach space~$A$, and that~$U$ is amenable in that weak topology if~$A$ is amenable (in fact, this characterises amenable~$A$; see \cite[Theorem~2]{paterson:1992}). In view of what has worked earlier this is an encouraging starting point, since we are indeed in that situation. So let us supply~$U$ with the weak topology inherited from~$A$, and, for convenience, let us assume\textemdash this could perhaps be another matter\textemdash that we can show that $H\simeq U\rtimes_\alpha G$ is a topological group in the product topology, and that it is then amenable. In that case, one has a left $H$-invariant mean on the bounded right uniformly continuous functions on~$H$ to work with, and the next step would presumably be to use Johnson's argument to show that this implies that a bounded derivation on~$H$ with values in a dual Banach $H$-bimodule~$E^\ast$ is inner. As earlier, this would then conclude the proof. It is at this point, however, that an obstacle arises. In order to be able to apply Johnson's argument to~$H$, one needs that, for all $x\in E$, the function $h\mapsto\angles{x,Dh\cdot h^{-1}}$ is in the function space on which the left invariant mean living on~$H$ acts. In the presumed situation, it should, therefore, be bounded and right uniformly continuous. In particular, its restriction to~$U$ should be right uniformly continuous. However, there seems to be no reason why this should in general be the case or, for that matter, why it should even be continuous. If the actions of~$U$ on~$E$ are strongly continuous, then these restricted functions are easily seen to be right uniformly continuous, as required, but we have no guarantee that this is the case if the $U$-actions and the derivation~$D$ of~$U$ originate from an enveloping $\lonealgebra$-bimodule structure. The point is that we are working with the group~$U$ in its inherited weak topology from~$A$, and not in the inherited norm topology from~$A$. In the latter topology the actions of~$U$ on~$E$ are evidently strongly continuous, but there seems to be no reason why this should still be the case for the weak topology on $U$, and it is the latter topology we must work with if we want to have the amenability of~$U$ from Paterson's result at our disposal. Thus this attempt, based on combining Paterson's result for amenable unital \Calgebras\ and Johnson's argument exploiting a left invariant mean on the right uniformly continuous functions on a topological group, runs aground.

It is at this point that Paterson's characterisation of strongly amenable unital \Calgebras\ (see \cite[Theorem~2]{paterson:1991} and the left/right discussion preceding \cref{res:lonealgebra_is_amenable_if_A_is_strongly_amenable}) comes in to overcome this obstruction originating from having the `wrong' topology on~$U$. Indeed, if~$A$ is strongly amenable, then there exists a left invariant mean on a space of functions on~$U$ that makes no reference to a specific topology on~$U$ at all, but that is naturally associated with the bounded bilinear forms on~$A$. It is immediate (see the proof of \cref{res:lonealgebra_is_amenable_if_A_is_strongly_amenable}) that the functions $u\mapsto\angles{x,Du\cdot u^{-1}}$ are in this space for all $x\in E$, together with the constants. It is then not too difficult\textemdash it is here that the amenability of the discrete group~$G$ is used; see the proof of \cref{res:core_argument_for_mean})\textemdash to construct a left invariant mean on the minimal space of functions on~$H$ on which one needs such a mean in order to be able to make Johnson's argument work, i.e.\ on the space of functions on~$H$ that is spanned by the constants and the functions $h\mapsto\angles{x,Dh\cdot h^{-1}}$ for $x\in E$ (see the proof of \cref{res:core_argument_for_innerness}). With this available,~$D$ is seen to be inner on~$H$, and then we are done. It is in this way, by combining Paterson's result \cite[Theorem~2]{paterson:1991} for strongly amenable unital \Calgebras\ with a somewhat more careful inspection of what the minimal requirements are in order to be able to apply Johnson's argument, that the proof is then concluded for such algebras after all. As will become apparent from Section~\ref{sec:amenability_of_abstract_groups}, these minimal requirements are not related  to topology at all. In the end, therefore, there is no role anymore for the amenability of~$U$ in the inherited weak topology of~$A$, even though at first this seemed to be the most natural thing to start with.

Actually, as may already be obvious from the above discussion, one does not really need that~$A$ is strongly amenable for $\lonealgebra$ to be amenable. The existence of a left invariant mean on the space of functions on~$U$ that is spanned by the constants and the functions $u\mapsto\angles{x,Du\cdot u^{-1}}$ for $x\in E$ is already sufficient, and Paterson's result \cite[Theorem~2]{paterson:1991} `only' implies that this is certainly the case if~$A$ is strongly amenable. It is, therefore, perhaps more precise to regard \cref{res:lonealgebra_is_amenable_general_case}, which is still in the general context, as the most prominent result of this paper. The fact that $\lonealgebra$ is amenable if~$A$ is strongly amenable and~$G$ is amenable is an appealing special case thereof.

This paper is organised as follows.

Section~\ref{sec:preliminaries} contains the necessary terminology and definitions, including that of the Banach algebra $\lonealgebra$.

Section~\ref{sec:amenability_of_abstract_groups} is concerned with prudent hypotheses implying that an abstract group has the property that every bounded derivation with values in a dual Banach bimodule is inner.

Section~\ref{sec:amenability_of_lonealgebra} contains the main results \cref{res:lonealgebra_is_amenable_general_case,res:lonealgebra_is_amenable_if_A_is_strongly_amenable}
on the amenability of $\lonealgebra$.

In Section~\ref{sec:converses} we briefly discuss converse implications. The situation here is largely open, with only a limited number of results available that can be derived via the detour  of the enveloping \Calgebra\ $A\rtimes_\alpha G$ of $\lonealgebra$. It is also argued here that this detour could involve loss of information, so that proper $\upL^1$-type arguments are needed.

For a discussion of a surmised general framework for amenability of crossed products of Banach algebras (as in \cite[Definition~3.2]{de_jeu_messerschmidt_wortel_UNPUBLISHED:2013}) that are associated with \Cstar-dynamical systems we refer to \cite[Section~3]{de_jeu_el_harti_pinto:2017b}.

\section{Preliminaries}\label{sec:preliminaries}

We start by establishing some terminology and notation. Since the definitions in the literature can slightly vary from author to author (or even greatly in the case of `left' and `right'), we shall also define the most basic notions.

If~$G$ is an abstract group, then $\linf{G}$ denotes the space of all bounded complex valued functions on~$G$. All subspaces of $\linf{G}$ will be assumed to carry the supremum norm.  For $g_0\in G$, define the \emph{left translation} $\laction{g_0}{G}\colon\linf{G}\to\linf{G}$ by $\laction{g_0}{G}\f(g)=\f(g_0g)$ for $\f\in\linf{G}$ and $g\in G$. Then $\laction{g_1g_2}{G}=\laction{g_2}{G}\laction{g_1}{G}$ for $g_1,g_2\in G$. The definition of a \emph{right translations} is similar, without an inverse. We have included the group in the notation, as later on there will be several groups occurring simultaneously.

If $\fspace{}{G}\subset\linf{G}$ is a not necessarily closed subspace, then $\fspace{}{G}$ is said to be \emph{left $G$-invariant} if it is invariant under $\laction{g}{G}$ for all $g\in G $. An element $x_{\fspace{}{G}}^\ast$ of the norm dual $\fspace{\,\ast}{G}$ of a left $G$-invariant subspace ${\fspace{}{G}}$ of $\linf{G}$ is said to be left $G$-invariant if $\angles{\laction{g}{G}\f,x_{\fspace{}{G}}^\ast}=\angles{\f,x_{\fspace{}{G}}^\ast}$ for all $\f\in\fspace{}{G}$ and $g\in G$.  A \emph{right~$G$-invariant} subspace and a right~$G$-invariant element of its dual are similarly defined.

Although we have employed it in Section~\ref{sec:introduction_and_overview}, we shall not use the terminology of `means' in the sequel, but simply state the properties that an element~$\mean$ of the norm dual of a subspace of $\linf{G}$ is required to have. As we shall see, the state-like property that $\norm{\mean}=1=\mean(\onefunction{G})$  (here~$\onefunction{G}$ denotes the function on~$G$ that is identically~$1$) will never be needed; see, however, part~\ref{rem:existence_of_a_left_invariant_state} of  \cref{rem:no_state_like_condition_and_existence_of_a_left_invariant_state}.

If~$G$ is an abstract group, then a \emph{Banach left $G$-module} is a Banach space~$E$ that is supplied with a left $G$-action with the property that there exists $K\geq 0$ such that $\norm{g\cdot x}\leq K\norm{x}$ for all $g\in G$ an $x\in E$. We do not assume that the $G$-action is unital. The definitions of a \emph{Banach right $G$-module} and of a \emph{Banach~$G$-bimodule} are analogous. A \emph {dual Banach $G$-bimodule} is obtained from a Banach $G$-bimodule by taking the adjoint actions.

If~$G$ is an abstract group, and~$E$ is a Banach~$G$-bimodule, then a \emph{derivation of the group~$G$ with values in~$E$} is a map $D\colon G\mapsto E$ such that $D(g_1g_2)=Dg_1\cdot g_2 + g_1\cdot Dg_2$ for all $g_1,g_2\in G$. It is a \emph{bounded derivation of the group~$G$} if $D(G)$ is a norm bounded subset of~$E$. For $x\in E$, the map $g\mapsto g\cdot x - x\cdot g$ for $g\in G$ is a bounded derivation of~$G$; such a derivation is called an \emph{inner derivation of the group~$G$}

If~$A$ is a Banach algebra, then a \emph{Banach left $A$-module} is a Banach space~$E$ that is supplied with a left $A$-action with the property that there exists a constant $K\geq 0$ such that $\norm{a\cdot x}\leq K\norm{a}\norm{x}$ for all $a\in A$ and $x\in E$. We do not assume that (if applicable) the $A$-action is unital. The definitions of a \emph{Banach right $A$-module} and a \emph{Banach~$A$-bimodule} are analogous. A \emph{dual Banach $A$-bimodule} is obtained from a Banach $A$-bimodule by taking the adjoint actions.

If~$A$ is a Banach algebra, and~$E$ is a Banach~$A$-bimodule, then a \emph{derivation of the Banach algebra~$A$ with values in~$E$} is a linear map $D\colon A\mapsto E$ such that $D(a_1 a_2)=Da_1\cdot a_2 + a_1\cdot Da_2$ for all $a_1,a_2\in A$. It is a \emph{bounded derivation of the Banach algebra~$A$} if~$D$ is a bounded operator between the normed spaces~$A$ and~$E$. For $x\in E$, the map $a\mapsto a\cdot x - x\cdot a$ for $a\in A$ is a bounded derivation of~$A$. Such a derivation is called an \emph{inner derivation of the Banach algebra~$A$}. The Banach algebra~$A$ is an \emph{amenable Banach algebra} if every bounded derivation of~$A$ with values in a dual Banach~$A$-bimodule is an inner derivation.

After this part on terminology and notation, we turn to the description of the Banach algebra $\lonealgebra$.

Let $(A,G,\alpha)$ be a \Cstar-dynamical system with~$G$ discrete, i.e.\ let $\alpha\colon G\to \Aut(A)$ be an action of a discrete group~$G$ as $^\ast$-automorphisms of a (not necessarily unital) \Calgebra~$A$. Let
\begin{equation*}
\lonealgebra=\{\,\,\texttt{\bf a}: G\longrightarrow A:\ \Vert \texttt{\bf a}\Vert:=\sum_{g\in G} \Vert a_g\Vert <\infty\,\},
\end{equation*}
where for typographical reasons we have written $a_g$ for $\texttt{\bf a}(g)$.
We supply $\lonealgebra$ with the usual twisted convolution product and involution, defined by
\begin{align}
(\texttt{\bf a}\texttt{\bf a}^\prime)(g)&=\sum_{h\in G} a_h \cdot \alpha_h(a_{h^{-1}g}^\prime)\label{eq:prod}\\
\intertext{for $\texttt{\bf a},\texttt{\bf a}^\prime\in \lonealgebra)$ and $g\in G$, and by}
{\texttt{\bf a}}^\ast(g)&=\alpha_g((a_{g^{-1}})^\ast)\label{eq:involution}
\end{align}
for $\texttt{\bf a}\in \lonealgebra)$ and $g\in G$, respectively. Then $\lonealgebra$ becomes a Banach algebra with isometric involution. The usual convolution algebra $\ell^1(G,A)$ is the special case $\ell^1(G,A;\textrm{triv})$. Specialising further, if $A=\mathbb C$, then $\ell^1(G,\mathbb C;\textrm{triv})$ is the usual group algebra $\ell^1(G)$.

Suppose that~$A$ is unital. In that case, there is a more convenient model for $\lonealgebra$, as we shall now indicate.

For $g\in G$, let $\delta_g\colon G\to A$ be defined by
\[
\delta_g(h)=\left\{
\begin{array}{ll}
1_A & \hbox{if}\ h=g;\\
0_A & \hbox{if}\ h\not=g,
\end{array} \right.
\]
where $1_A$ and $0_A$ denote the identity and the zero element of~$A$, respectively. Then $\delta_g\in \lonealgebra$ and $\Vert \delta_g\Vert =1$ for all $g\in G$. Furthermore, $\lonealgebra$ is unital with $\delta_e$ as identity element, where~$e$ denotes the identity element of~$G$. Using \cref{eq:prod}, one finds that
\begin{equation*}
\delta_{gh}=\delta_{g}\cdot \delta_h \quad
\end{equation*}
for all $g,k\in G$. Hence $\delta_g$ is invertible in $\lonealgebra$ for all $g\in G$, and we have $\delta_g^{-1}=\delta_{g^{-1}}$. It is now obvious that the set $\{\,\delta_g : g\in G\,\}$ consists of norm one elements of $\lonealgebra$, and that it is a subgroup of the invertible elements of $\lonealgebra$ that is isomorphic to~$G$.

In the same vein, it follows easily from \cref{eq:prod,eq:involution} that we can view~$A$ isometrically as a closed *-subalgebra of $\lonealgebra$, namely as $\{\,a\delta_e: a\in A\,\}$, where $a\delta_e$ is the element of $\lonealgebra$ that assumes the value $a\in A$ at $e\in G$, and the value $0_A\in A$ elsewhere.

If $\texttt{\bf a}\in \lonealgebra$, then it is easy to see that $\texttt{\bf a}=\sum_{g\in G} (a_g \delta_e)\delta_g$ as an absolutely convergent series in $\lonealgebra$. Hence, if we identify $a_g \delta_e$ and $a_g$, we have $\texttt{\bf a}=\sum_{g\in G} a_g \delta_g$ as an absolutely convergent series in $\lonealgebra$.

Finally, let us note that an elementary computation, using \cref{eq:prod} and the identifications just mentioned, shows that the identity
\begin{equation*}
\delta_g a\delta_g^{-1}=\alpha_g(a)
\end{equation*}
holds in $\lonealgebra$ for all $g\in G$ and $a\in A$.

The following key observation, already used in \cite{de_jeu_el_harti_pinto:2017b}, is now clear.

\begin{lemma}\label{res:lonealgebra_contains_semidirect_product}
Let  $(A,G,\alpha)$ be a \Cstar-dynamical system, where~$G$ is a discrete group and~$A$ a unital \Calgebra\ with unitary group~$U$. Let $H=\set{u\delta_g: u\in U,\, g\in G}$. Then~$H$ and the semidirect product $U\rtimes_\alpha G$ are canonically isomorphic as abstract groups. Moreover, the linear span of~$H$ is dense in $\lonealgebra$.
\end{lemma}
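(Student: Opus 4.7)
The plan is to first establish that $H$ is indeed a subgroup of the invertible elements of $\lonealgebra$, then exhibit an explicit isomorphism with $U\rtimes_\alpha G$, and finally prove density by combining truncation of absolutely convergent series with the fact that every element of a unital \Calgebra\ is a linear combination of unitaries.

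First I would verify closure of $H$ under multiplication and inversion. Using the commutation identity $\delta_g a\delta_g^{-1}=\alpha_g(a)$ recalled just before the lemma, for $u,v\in U$ and $g,h\in G$ one computes
\begin{equation*}
(u\delta_g)(v\delta_h)=u\alpha_g(v)\delta_{gh}.
\end{equation*}
Since $\alpha_g$ is a $^\ast$-automorphism, $\alpha_g(v)\in U$, hence $u\alpha_g(v)\in U$, so the product lies in~$H$. The identity element is $1_A\delta_e\in H$, and $(u\delta_g)^{-1}=\alpha_{g^{-1}}(u^{-1})\delta_{g^{-1}}\in H$. Thus~$H$ is an abstract subgroup of $\Inv(\lonealgebra)$.

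Next, I would define $\Phi\colon U\rtimes_\alpha G\to H$ by $\Phi(u,g)=u\delta_g$, where $U\rtimes_\alpha G$ carries its standard multiplication $(u_1,g_1)(u_2,g_2)=(u_1\alpha_{g_1}(u_2),\,g_1g_2)$. The computation above shows that~$\Phi$ is a homomorphism, and it is evidently surjective. For injectivity, note that $u\delta_g$, viewed as a function $G\to A$, has $u$ in position~$g$ and zero elsewhere; hence $u_1\delta_{g_1}=u_2\delta_{g_2}$ forces $g_1=g_2$ and then $u_1=u_2$.

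For density, the key input is that any element of a unital \Calgebra\ is a finite linear combination of unitaries (for instance, any self-adjoint contraction is a mean of two unitaries, so every element is a combination of at most four unitaries). Consequently, for each $g\in G$ and $a\in A$ the element $a\delta_g$ lies in the linear span of~$H$. Given $\texttt{\bf a}\in\lonealgebra$, the series $\texttt{\bf a}=\sum_{g\in G} a_g\delta_g$ converges absolutely, so for any $\varepsilon>0$ there is a finite $F\subset G$ with $\sum_{g\notin F}\|a_g\|<\varepsilon$. The truncation $\sum_{g\in F} a_g\delta_g$ belongs to the linear span of~$H$ and approximates $\texttt{\bf a}$ within~$\varepsilon$.

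I do not expect any real obstacle here: the lemma is essentially a bookkeeping statement. The only step worth naming as \emph{the} point to check carefully is the compatibility of the multiplication in~$H$ with the semidirect product multiplication, since this is where the action~$\alpha$ intervenes through the commutation relation $\delta_g a\delta_g^{-1}=\alpha_g(a)$.
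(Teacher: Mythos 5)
Your proposal is correct and follows exactly the route the paper intends: the paper states this lemma without a separate proof, declaring it ``now clear'' from the preceding computations ($\delta_{gh}=\delta_g\delta_h$, the embedding $a\mapsto a\delta_e$, the absolutely convergent expansion $\texttt{\bf a}=\sum_{g\in G}a_g\delta_g$, and the commutation relation $\delta_g a\delta_g^{-1}=\alpha_g(a)$), and your write-up is precisely the expansion of those observations together with the standard fact that every element of a unital \Calgebra{} is a finite linear combination of unitaries. No gaps.
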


\section{Innerness of derivations of abstract groups}\label{sec:amenability_of_abstract_groups}

This section is concerned with prudent hypotheses ensuring that a bounded derivation of an abstract group~$G$ with values in a dual Banach~$G$-bimodule is inner.

We start by employing Johnson's argument \cite[p.~33]{johnson_COHOMOLOGY_IN_BANACH_ALGEBRAS:1972} under such hypotheses. It seems customary in the literature to first change the~$G$-bimodule structure into one where the left action is trivial, prove that for such~$G$-bimodule structures bounded derivations in dual Banach $G$-modules are inner, and then conclude from this special case that this also holds in the general case. As the proof of \cref{res:core_argument_for_innerness} below shows, this is hardly an actual simplification. It also shows how the map $g\mapsto Dg\cdot g^{-1}$, occurring in the definition of a strongly amenable unital \Calgebra, is quite natural in a more general context.

In the proof, a family of functions occur that will frequently reappear in the sequel. We shall now define these, and subsequently proceed with the innerness of derivations.

\begin{definition}\label{def:phi_function}
Let~$G$ be an abstract group, and let~$E$ be a Banach~$G$-bimodule. Let $D\colon G\to E^\ast$ be a derivation. For $x\in E$, define $\phifunction{x,D}{G}:G\mapsto \CC$ by
\[
\phifunction{x,D}{G}(g)=\angles{x,Dg\cdot g^{-1}}
\]
for $g\in G$.
\end{definition}

\begin{lemma}\label{res:core_argument_for_innerness}
Let~$G$ be an abstract group, and let~$E$ be a Banach~$G$-bimodule where the left $G$-action on~$E$ is unital. 

Let $D\colon G\to E^\ast$ be a bounded derivation. Then $\phifunction{x,D}{G}\in\linf{G}$ for all $x\in E$.

Put
\[
\fspace{D}{G}=\spanlc\set{\set{\phifunction{x,D}{G} : x\in E}\cup\set{\onefunction{G}}}.
\]
Then $\fspace{D}{G}$ is left $G$-invariant.

Suppose that there exists a left $G$-invariant element $\mean_{\fspace{D}{G}}$ of ${\bigpars{\fspace{D}{G}}}^\ast$ with $\angles{\onefunction{G},\mean_{\fspace{D}{G}}}=1$. Then~$D$ is inner.
\end{lemma}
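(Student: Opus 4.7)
My plan is to run Johnson's classical averaging argument directly at the group level, with the hypothesis that the left $G$-action on $E$ is unital (equivalently, the adjoint right $G$-action on $E^\ast$ is unital) used at exactly one place to collapse a factor of the form $g\cdot g^{-1}$. I split the argument into the three assertions of the lemma.

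Step one (boundedness) is immediate. Writing $K$ for a uniform bound on the right $G$-action on $E^\ast$ and $M := \sup_{g\in G}\norm{Dg}$, which is finite since $D$ is bounded by hypothesis, I get $\abs{\phifunction{x,D}{G}(g)}\le KM\norm{x}$ for all $g\in G$ and $x\in E$; thus $\phifunction{x,D}{G}\in\linf{G}$ and the map $x\mapsto\phifunction{x,D}{G}$ is a bounded linear operator from $E$ to $\linf{G}$.

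Step two (left $G$-invariance) rests on a single algebraic identity in $E^\ast$. Starting from the cocycle relation $D(g_0 g)=Dg_0\cdot g+g_0\cdot Dg$, multiplying on the right by $(g_0 g)^{-1}=g^{-1}g_0^{-1}$, and using that the right $G$-action on $E^\ast$ is unital to simplify $g\cdot g^{-1}$ to the identity of $E^\ast$, I obtain
$$
D(g_0 g)\cdot(g_0 g)^{-1} = Dg_0\cdot g_0^{-1} + g_0\cdot\bigpars{Dg\cdot g^{-1}}\cdot g_0^{-1}.
$$
Pairing with $x\in E$ and transferring the conjugation on $E^\ast$ across to $E$ by adjoints yields
$$
\laction{g_0}{G}\phifunction{x,D}{G} = \phifunction{x,D}{G}(g_0)\,\onefunction{G} + \phifunction{g_0^{-1}\cdot x\cdot g_0,\,D}{G},
$$
which lies in $\fspace{D}{G}$ by construction.

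Step three (innerness) follows by Johnson's averaging trick. Define $\xi\in E^\ast$ by
$$
\angles{x,\xi} := -\angles{\phifunction{x,D}{G},\mean_{\fspace{D}{G}}}\qquad (x\in E);
$$
linearity and boundedness of $\xi$ are inherited from those of $x\mapsto\phifunction{x,D}{G}$ established in step one, together with $\mean_{\fspace{D}{G}}\in\bigpars{\fspace{D}{G}}^\ast$. Applying $\mean_{\fspace{D}{G}}$ to the translation identity just obtained, and using its left $G$-invariance on the left-hand side together with $\angles{\onefunction{G},\mean_{\fspace{D}{G}}}=1$, gives
$$
\angles{\phifunction{x,D}{G},\mean_{\fspace{D}{G}}} = \angles{x,Dg_0\cdot g_0^{-1}} + \angles{\phifunction{g_0^{-1}\cdot x\cdot g_0,\,D}{G},\mean_{\fspace{D}{G}}}.
$$
Rewriting this in terms of $\xi$ and of the adjoint actions on $E^\ast$, it becomes $\angles{x,Dg_0\cdot g_0^{-1}} = \angles{x,g_0\cdot\xi\cdot g_0^{-1}-\xi}$ for all $x\in E$, hence $Dg_0\cdot g_0^{-1}=g_0\cdot\xi\cdot g_0^{-1}-\xi$. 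Multiplying on the right by $g_0$ and using the unital right action on $E^\ast$ once more yields $Dg_0=g_0\cdot\xi-\xi\cdot g_0$, so $D$ is the inner derivation of the group $G$ implemented by $\xi$.

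The only mildly delicate aspect is the sign and adjoint-action bookkeeping in step three; no substantive obstacle is expected, because the hypotheses of the lemma have been chosen precisely so that Johnson's averaging trick closes at each step, with the role of the classical $G$-invariant mean on $\linf{G}$ played here by the much more economical functional $\mean_{\fspace{D}{G}}$.
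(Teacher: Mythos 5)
Your proposal is correct and follows essentially the same route as the paper's proof: the same translation identity $\laction{g_0}{G}\phifunction{x,D}{G}=\phifunction{x,D}{G}(g_0)\onefunction{G}+\phifunction{g_0^{-1}\cdot x\cdot g_0,D}{G}$, the same averaging of it against $\mean_{\fspace{D}{G}}$ to produce the implementing functional, and the same (correct) use of the unitality of the left $G$-action on $E$ to collapse the adjoint right action of $g^{-1}g$ on $E^\ast$. The only deviations are cosmetic\textemdash a sign in the definition of the implementing functional and doing the final algebra in $E^\ast$ rather than via the paper's substitution $x\mapsto g\cdot x$ before averaging.
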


\begin{proof}
It is clear that $\phifunction{x,D}{G}\in\linf{G}$ for all $x\in E$, so we start with the left $G$-invariance of $\fspace{D}{G}$. Let $x\in E$ and $g_0\in G$. Then, for $g\in G$,
\begin{align*}
(\laction{g_0}{G}\phifunction{x,D}{G})(g)&=\phifunction{x,D}{G}(g_0g)\\
&=\angles{x,D(g_0g)\cdot(g_0g)^{-1}}\\
&=\angles{x,Dg_0\cdot g\cdot g^{-1}\cdot g_0^{-1}+g_0\cdot Dg\cdot g^{-1}\cdot g_0^{-1}}\\
&=\angles{x,Dg_0\cdot g_0^{-1}} + \angles{g_0^{-1}\cdot x\cdot g_0,Dg\cdot g^{-1}}.
\end{align*}
Hence
\begin{equation}\label{eq:basic_relation}
\laction{g_0}{G}\phifunction{x,D}{G} = \angles{x,Dg_0\cdot g_0^{-1}}\onefunction{G} + \phifunction{g_0^{-1}\cdot x\cdot g_0,D}{G}
\end{equation}
for all $x\in E$ and $g\in G$.
From this the left $G$-invariance of $\fspace{D}{G}$ is clear.

If an element $\mean_{\fspace{D}{G}}$ of ${\bigpars{\fspace{D}{G}}}^\ast$ as specified exists, then define $x_0^\ast\colon E\to\CC$ by
\[
\angles{x,x_0^*}= \angles{\phifunction{x,D}{G},\mean_{\fspace{D}{G}}}
\]
for $x\in E$. The boundedness of $\mean_{\fspace{D}{G}}$, of~$D$, and of the left $G$-action on $E$ ensure that $x_0^\ast$ is bounded.

Replacing~$g_0$ with~$g$, and~$x$ with $g\cdot x$ in \cref{eq:basic_relation}, we find that
\[
\laction{g}{G}\phifunction{g\cdot x,D}{G} = \angles{x,Dg}\onefunction{G} + \phifunction{x\cdot g,D}{G}
\]
for all $x\in E$ and $g\in G$; it is here that we use that the left $G$-action on~$E$ is unital. Using this, we see that, for all $x\in E$ and $g\in G$,
\begin{align*}
\angles{x,g\cdot x_0^\ast}&=\angles{x\cdot g,x_0^\ast}\\
&=\angles{\phifunction{x\cdot g,D}{G},\mean_{\fspace{D}{G}}}\\
&=\angles{\laction{g}{G}\phifunction{g\cdot x,D}{G} - \angles{x,Dg}\onefunction{G}, \mean_{\fspace{D}{G}}}\\
&=\angles{\phifunction{g\cdot x.D}{G},\mean_{\fspace{D}{G}}} - \angles{x,Dg}\\
&=\angles{g\cdot x, x_0^\ast} - \angles{x,Dg}\\
&=\angles{x, x_0^\ast\cdot g} - \angles{x,Dg}.
\end{align*}
Hence $Dg=x_0^\ast\cdot g - g\cdot x_0^\ast$ for all $g\in G$, so that~$D$ is inner.
\end{proof}

\begin{remark}\label{rem:no_state_like_condition_and_existence_of_a_left_invariant_state}\quad
\begin{enumerate}
\item\label{rem:no_state_like_condition} Note that it is not required in \cref{res:core_argument_for_innerness} that $\norm{\mean_{\fspace{D}{G}}}=1$, i.e.\  we do not impose the state-like condition $\norm{\mean_{\fspace{D}{G}}}=1=\angles{\onefunction{G},\mean_{\fspace{D}{G}}}$.
\item\label{rem:existence_of_a_left_invariant_state} Suppose that there exists a left $G$-invariant element $\mean_{\linf{G}}$ of $\linf{G}^\ast$ such that $\angles{\onefunction{G},\mean_{\linf{G}}}=1$. In that case, \cref{res:core_argument_for_innerness} shows that every bounded derivation of~$G$ with values in an arbitrary dual Banach $G$-bimodule~$E^\ast$ is inner, provided that the left $G$-action on~$E$ is unital. Then \cite[Theorem~11.8.i]{pier_AMENABLE_LOCALLY_COMPACT_GROUPS:1984} implies that the discrete group~$G$ is amenable in the sense of \cite[Definition~4.2]{pier_AMENABLE_LOCALLY_COMPACT_GROUPS:1984} (see also \cite[Definition~3.1]{pier_AMENABLE_LOCALLY_COMPACT_GROUPS:1984}), and consequently \cite[Proposition~3.2]{pier_AMENABLE_LOCALLY_COMPACT_GROUPS:1984} yields that there exists a left $G$-invariant state on $\linf{G}$. Thus the discrete group~$G$ is, in fact, an amenable topological group as this notion was defined in Section~\ref{sec:introduction_and_overview}. However, in order to emphasise that it is the `actual' natural condition, we shall, when applicable, insist on using the seemingly weaker requirement that there exist a left $G$-invariant element $\mean_{\linf{G}}$ of $\linf{G}^\ast$ such that $\angles{\onefunction{G},\mean_{\linf{G}}}=1$.
\end{enumerate}
\end{remark}

\begin{remark}\label{rem:boundedness_of_left_group_action_is sufficient}
Inspection of the proof of \cref{res:core_argument_for_innerness} shows that the hypotheses on the~$G$-actions and~$D$ can be relaxed. It is sufficient that the left and right $G$-actions on~$E$ be by bounded operators, that the left $G$-action on~$E$ be unital, and that $\set{Dg\cdot g^{-1} : g\in G}$ be a norm bounded subset of $E^\ast$.
\end{remark}

The next step is to investigate the innerness of bounded derivations of a group that is generated by two subgroups, one of which is normal. For this result, \cref{res:derivation_is_inner_on_generated_group} below, we need the following preparation.

\begin{lemma}\label{res:core_argument_for_mean}
Let~$G$ be an abstract group that is generated by a subgroup~$H$ and a normal subgroup~$N$.

Let $\fspace{}{N}$ be a left $N$-invariant subspace of $\linf{N}$ containing the constants, and let $\fspace{}{G}$ be a left $G$-invariant subspace of $\linf{G}$ containing the constants and such that $\f\!\!\upharpoonright_N\in\fspace{}{N}$ for all $\f\in\fspace{}{G}$.

Suppose that there exists a left $H$-invariant element $\mean_{\linf{H}}$ of $\linf{H}^\ast$ with $\angles{\onefunction{H},\mean_{\ell^\infty (H)}}=1$, as well as a left $N$-invariant element $\mean_{\fspace{}{N}}$ of ${\fspace{\,\ast}{N}}$ with $\angles{\onefunction{N},\mean_{\fspace{}{N}}}=1$.

Then there exists a left $G$-invariant element $\mean_{\fspace{}{G}}$ of $\fspace{\,\ast}{G}$ with $\angles{\onefunction{G},\mean_{\fspace{}{G}}}=1$.
\end{lemma}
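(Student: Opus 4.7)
My plan is to construct $\mean_{\fspace{}{G}}$ by iterated averaging: first apply $\mean_{\fspace{}{N}}$ along cosets of $N$, and then apply $\mean_{\linf{H}}$ over $H$. Concretely, for $\f\in\fspace{}{G}$ and $h\in H$, I would set
\[
\ps_\f(h)=\angles{\bigpars{\laction{h}{G}\f}\!\upharpoonright_N,\mean_{\fspace{}{N}}},
\]
so that $\ps_\f(h)=\mean_{\fspace{}{N}}\bigpars{n\mapsto\f(hn)}$. The hypothesis that $\fspace{}{G}$ is left $G$-invariant and that restriction of elements of $\fspace{}{G}$ to $N$ lands in $\fspace{}{N}$ ensures that this is well defined. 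Since $\abs{\ps_\f(h)}\leq\norm{\mean_{\fspace{}{N}}}\,\norm{\f}_\infty$, the function $\ps_\f$ lies in $\linf{H}$, and then I would define
\[
\angles{\f,\mean_{\fspace{}{G}}}:=\angles{\ps_\f,\mean_{\linf{H}}}.
\]
Linearity and boundedness of $\f\mapsto\mean_{\fspace{}{G}}(\f)$ are then immediate from those of the ingredients, and $\ps_{\onefunction{G}}=\onefunction{H}$ yields the normalisation $\angles{\onefunction{G},\mean_{\fspace{}{G}}}=1$.

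Next I would check left $G$-invariance. Since $G$ is generated by $H$ and $N$, it suffices to verify $\angles{\laction{g_0}{G}\f,\mean_{\fspace{}{G}}}=\angles{\f,\mean_{\fspace{}{G}}}$ separately for $g_0=h_0\in H$ and for $g_0=n_0\in N$. For $h_0\in H$, a direct computation gives $\ps_{\laction{h_0}{G}\f}(h)=\mean_{\fspace{}{N}}\bigpars{n\mapsto\f(h_0 hn)}=\ps_\f(h_0 h)=(\laction{h_0}{H}\ps_\f)(h)$, and left $H$-invariance of $\mean_{\linf{H}}$ closes the case. For $n_0\in N$, the key move is to push $n_0$ past $h$ using normality: writing $n_0 h=h\cdot (h^{-1}n_0 h)$ and setting $n_0^\prime=h^{-1}n_0 h\in N$, we get
\[
\f(n_0 h n)=\f\bigpars{h\,n_0^\prime n}=\laction{n_0^\prime}{N}\bigpars{\laction{h}{G}\f\!\upharpoonright_N}(n),
\]
so $\ps_{\laction{n_0}{G}\f}(h)=\mean_{\fspace{}{N}}\bigpars{\laction{n_0^\prime}{N}(\laction{h}{G}\f\!\upharpoonright_N)}=\mean_{\fspace{}{N}}\bigpars{\laction{h}{G}\f\!\upharpoonright_N}=\ps_\f(h)$ by left $N$-invariance of $\mean_{\fspace{}{N}}$. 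Hence $\ps_{\laction{n_0}{G}\f}=\ps_\f$ as functions on $H$, and therefore $\angles{\laction{n_0}{G}\f,\mean_{\fspace{}{G}}}=\angles{\f,\mean_{\fspace{}{G}}}$.

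The only conceptual obstacle I anticipate is conceptual rather than technical: one must resist the reflex of treating $H$ as the quotient $G/N$ (which would be the classical Reiter/F\o lner-style setup for amenability of extensions). Here $H$ is merely a subgroup complementing $N$ in the generation sense, and the substitute for a quotient computation is the normality identity $n_0 h=h n_0^\prime$, which is exactly what keeps the inner $\mean_{\fspace{}{N}}$-average $N$-translation invariant when we left-translate by an element of $N$. Once one sees that normality does the job in place of a quotient structure, the argument is a straightforward double-averaging, and no further amenability or topology assumptions are needed beyond those stated.
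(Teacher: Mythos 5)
Your proof is correct and follows essentially the same route as the paper's: the same double averaging $\f\mapsto\ps_\f\mapsto\angles{\ps_\f,\mean_{\linf{H}}}$, with $H$-invariance read off from $\ps_{\laction{h_0}{G}\f}=\laction{h_0}{H}\ps_\f$ and $N$-invariance obtained by pushing $n_0$ past $h$ via normality ($n_0h=hn_0'$) and invoking the $N$-invariance of $\mean_{\fspace{}{N}}$. No gaps.
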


Every $g\in G$ can be written as $g=hn$ with $h\in H$ and $n\in N$, but it is not required that this factorisation be unique, i.e.\ it is sufficient that~$G$ be a quotient of an external semi-direct product of its subgroups~$H$ and~$N$, and not necessarily equal to an internal semi-direct product of these subgroups. When applying \cref{res:core_argument_for_mean} in our principal case of interest (see the proof of \cref{res:derivation_is_inner_on_lonealgebra}) the latter will actually be the case, but it plays no role in the proofs.

\begin{proof}
This is a variation on a construction that is usually carried out in the context of a discrete (see e.g.\ \cite[Proposition~4.5.5]{ceccherini-silberstein_coornaert_CELLULAR_AUTOMATA_AND_GROUPS:2010}) or, more generally (see e.g.\ \cite[Proposition~13.4]{pier_AMENABLE_LOCALLY_COMPACT_GROUPS:1984}), a locally compact  Hausdorff topological group~$G$ having a normal closed subgroup~$N$ such that~$N$ and $G/N$ are both amenable.

For $\f\in\fspace{}{G}$, let $\tilde \f\in\linf{H}$ be defined by
\[
\tilde \f(h)=\angles{(\laction{h}{G} \f)\!\!\upharpoonright_N,\mean_{\fspace{}{N}}},
\]
for $h\in H$, and put
\[
\angles{\f,\mean_{\fspace{}{G}}}=\angles{\tilde \f,\mean_{\linf{H}}}.
\]
Then $\mean_{\fspace{}{G}}\in\fspace{\ast}{G}$, and $\angles{\onefunction{G},\mean_{\fspace{}{G}}}=1$.

It is elementary that $(\laction{h_0}{G} \f)^\sim=\laction{h_0}{H} \tilde \f$ for all $\f\in\fspace{}{G}$ and $h_0\in H$. The $\laction{h_0}{H}$-invariance of $\mean_{\linf{H}}$ then implies that $\mean_{\fspace{}{G}}$ is invariant under $\laction{h}{G}$ for all $h\in H$.

For $\f\in\fspace{}{G}$, $n_0\in N$, and $h_0\in H$ fixed, we have
\begin{align*}
(\laction{n_0}{G} \f)^\sim (h_0)&=\angles{(\laction{h_0}{G} \laction{n_0}{G} \f)\!\!\upharpoonright_N,\mean_{\fspace{}{N}}}\\
&=\angles{(\laction{n_0 h_0}{G} \f)\!\!\upharpoonright_N,\mean_{\fspace{}{N}}}.
\end{align*}
Since~$N$ is normal, there exists $n_0^\prime\in N$ such that $n_0h_0=h_0 n_0^\prime$, and it is elementary that $(\laction{h_0n_0^\prime}{G}\f)\!\!\upharpoonright_N=\laction{n_0^\prime}{N}[(\laction{h_0}{G} \f)\!\!\upharpoonright_N]$. Hence, using the $\laction{n_0^\prime}{N}$-invariance of $\mean_{\fspace{}{N}}$ in the penultimate step, we have
\begin{align*}
(\laction{n_0}{G} \f)^\sim (h_0)&=\angles{(\laction{n_0 h_0}{G} \f)\!\!\upharpoonright_N,\mean_{\fspace{}{N}}}\\
&=\angles{(\laction{h_0 n_0^\prime}{G} \f)\!\!\upharpoonright_N,\mean_{\fspace{}{N}}}\\
&=\angles{\laction{n_0^\prime}{N}[(\laction{h_0}{G} \f)\!\!\upharpoonright_N],\mean_{\fspace{}{N}}}\\
&=\angles{(\laction{h_0}{G} \f)\!\!\upharpoonright_N,\mean_{\fspace{}{N}}}\\
&=\tilde \f(h_0).
\end{align*}
Therefore $(\laction{n_0}{G} \f)^\sim=\tilde f$; it follows that $\mean_{\fspace{}{G}}$ is also $\laction{n}{G}$-invariant for all $n\in N$.

Since~$G$ is generated by~$H$ and~$N$, this concludes the proof.
\end{proof}

We can now combine \cref{res:core_argument_for_innerness} and \cref{res:core_argument_for_mean} and obtain the following result. The prudent formulation of the hypotheses in \cref{res:core_argument_for_innerness,res:core_argument_for_mean} allows us to be likewise prudent in the hypotheses on the group~$N$ in the following result. The functions $\phifunction{x,D}{N}$ figuring in it are the ones as in \cref{def:phi_function}, but then for the restrictions of the actions and of the derivation to~$N$. Obviously, these are simply the restrictions of the $\phifunction{x,D}{G}$ to~$N$.

\begin{proposition}\label{res:derivation_is_inner_on_generated_group}
Let~$G$ be an abstract group that is generated by a subgroup~$H$ and a normal subgroup~$N$, where~$H$ has the property that there exists a left $H$-invariant element $\mean_{\linf{H}}$ of $\linf{H}^\ast$ with $\angles{\onefunction{H},\mean_{\ell^\infty (H)}}=1$.

Let~$E$ be a~Banach $G$-bimodule where the left $G$-action on~$E$ is unital, and let $D\colon G\to E^\ast$ be a bounded derivation. Let the left $N$-invariant subspace $\fspace{D}{N}$ of $\linf{N}$ be defined by
\[
\fspace{D}{N}=\spanlc\set{\set{\phifunction{x,D}{N} : x\in E}\cup\set{\onefunction{N}}},
\]
and suppose that there exists a left $N$-invariant element $\mean_{\fspace{D}{N}}$ of ${\bigpars{\fspace{D}{N}}}^\ast$ with $\angles{\onefunction{N},\mean_{\fspace{D}{N}}}=1$.

Then~$D$ is an inner derivation of~$G$.
\end{proposition}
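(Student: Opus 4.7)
The proof is a direct assembly of the two preceding lemmas, with \cref{res:core_argument_for_mean} used to manufacture the invariant functional that feeds into \cref{res:core_argument_for_innerness}. The plan is to define the ``global'' counterpart
\[
\fspace{D}{G}=\spanlc\set{\set{\phifunction{x,D}{G} : x\in E}\cup\set{\onefunction{G}}}
\]
of $\fspace{D}{N}$, produce a left $G$-invariant element $\mean_{\fspace{D}{G}}$ of $(\fspace{D}{G})^\ast$ with $\angles{\onefunction{G},\mean_{\fspace{D}{G}}}=1$, and then quote \cref{res:core_argument_for_innerness} to conclude that $D$ is inner.

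First I would check that the pair $\fspace{D}{N}\subset\linf{N}$ and $\fspace{D}{G}\subset\linf{G}$ meets the hypotheses of \cref{res:core_argument_for_mean}. Both spaces contain the constants and consist of bounded functions (the boundedness of the $\phifunction{x,D}{\cdot}$ follows from boundedness of~$D$ and of the~$G$-actions on~$E$). The left $N$-invariance of $\fspace{D}{N}$ and the left $G$-invariance of $\fspace{D}{G}$ are precisely what \cref{res:core_argument_for_innerness} delivers (applied to~$N$ with the restricted actions and restricted derivation in the first instance, and to~$G$ in the second; note that the restricted left $N$-action on~$E$ remains unital since the identity of~$N$ equals the identity of~$G$). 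The compatibility condition $\f\!\!\upharpoonright_N\in\fspace{D}{N}$ for every $\f\in\fspace{D}{G}$ is immediate from the evident identities $\phifunction{x,D}{G}\!\!\upharpoonright_N=\phifunction{x,D}{N}$ and $\onefunction{G}\!\!\upharpoonright_N=\onefunction{N}$, combined with linearity.

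With these checks in place, the hypotheses of \cref{res:core_argument_for_mean} are met: the subgroup~$H$ carries the left $H$-invariant element $\mean_{\linf{H}}$ of $\linf{H}^\ast$ by assumption, and the normal subgroup~$N$ carries the left $N$-invariant element $\mean_{\fspace{D}{N}}$ of $(\fspace{D}{N})^\ast$ by assumption. Hence \cref{res:core_argument_for_mean} produces a left $G$-invariant element $\mean_{\fspace{D}{G}}$ of $(\fspace{D}{G})^\ast$ with $\angles{\onefunction{G},\mean_{\fspace{D}{G}}}=1$. Feeding this back into \cref{res:core_argument_for_innerness}, which requires exactly such a functional on exactly this space, yields that $D$ is an inner derivation of~$G$.

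There is no real obstacle here; the content of the proposition has been arranged precisely so that the definitions match up. The only step that deserves a moment's care is the verification of the compatibility condition relating $\fspace{D}{G}$ to $\fspace{D}{N}$ under restriction, which is what justifies using the single space $\fspace{D}{N}$ (rather than all of $\linf{N}$) on the normal-subgroup side of \cref{res:core_argument_for_mean}. This ``prudent'' bookkeeping is what allows the hypothesis on~$N$ to be the weak condition on $(\fspace{D}{N})^\ast$ rather than the stronger requirement of a left $N$-invariant mean on $\linf{N}$, and is precisely the feature that will later accommodate strong (as opposed to ordinary) amenability of the \Cstar-algebra~$A$.
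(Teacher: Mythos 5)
Your proposal is correct and follows essentially the same route as the paper's own proof: define $\fspace{D}{G}$, use the first part of \cref{res:core_argument_for_innerness} for the invariance of $\fspace{D}{N}$ and $\fspace{D}{G}$, observe that restriction to $N$ carries $\fspace{D}{G}$ onto $\fspace{D}{N}$, then chain \cref{res:core_argument_for_mean} into the last part of \cref{res:core_argument_for_innerness}. The extra checks you spell out (boundedness, unitality of the restricted $N$-action, the restriction identities) are exactly the ones the paper treats as immediate.
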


\begin{proof}
Put
\[
\fspace{D}{G}=\spanlc\set{\set{\phifunction{x,D}{G} : x\in E}\cup\set{\onefunction{G}}}.
\]
The first part of \cref{res:core_argument_for_innerness} shows that $\fspace{D}{N}$ is a left $N$-invariant subspace of $\linf{N}$, and that $\fspace{D}{G}$ is left $G$-invariant subspace of $\linf{G}$. From the comments preceding the proposition, it is trivial that the set of restrictions of the elements of $\fspace{D}{G}$ to~$N$ equals $\fspace{D}{N}$.

We are now in the situation of \cref{res:core_argument_for_mean}, and we conclude that there exists a left $G$-invariant element $m_{\fspace{D}{G}}$ of ${\bigpars{\fspace{D}{G}}}^\ast$ with $\angles{\onefunction{G}, m_{\fspace{D}{G}}}=1$. An appeal to \cref{res:core_argument_for_innerness} shows that~$D$ is an inner derivation of~$G$.
\end{proof}

\begin{remark}\label{rem:boundedness_of_left_group_action_of_generated_group_is sufficient} In view of \cref{rem:boundedness_of_left_group_action_is sufficient} and the above proof, the hypotheses in \cref{res:derivation_is_inner_on_generated_group} on the $G$-actions and~$D$ can be relaxed. It is sufficient that the left and right $G$-actions on~$E$ be by bounded operators, that the left $G$-action on~$E$ be unital, and that $\set{Dg\cdot g^{-1} : g\in G}$ be a norm bounded subset of $E^\ast$.
\end{remark}

\section{Amenability of $\lonealgebra$}\label{sec:amenability_of_lonealgebra}

All that remains to be done is to combine \cref{res:lonealgebra_contains_semidirect_product} and \cref{res:derivation_is_inner_on_generated_group}, and add Paterson's result \cite[Theorem~2]{paterson:1991} later on. This is made possible by the prudence concerning hypotheses in Section~\ref{sec:amenability_of_abstract_groups}.

We recall that the condition on~$G$ occurring in the results in this section is actually equivalent to requiring that there exist a left $G$-invariant state on $\linf{G}$; see part \ref{rem:existence_of_a_left_invariant_state}
of \cref{rem:no_state_like_condition_and_existence_of_a_left_invariant_state}

The following result is the most precise one in this section, because it is concerned with only one bounded derivation of $\lonealgebra$. For the convenience of the reader, the definition of the functions $\phifunction{x,D}{U}$ from \cref{def:phi_function} is included again, as it will be in \cref{res:lonealgebra_is_amenable_general_case}.

\begin{proposition}\label{res:derivation_is_inner_on_lonealgebra}
Let $\dynsys$ be a \Cstar-dynamical system, where~$A$ is a unital \Calgebra\ with unitary group~$U$, and~$G$ is a discrete group such that there exists a left $G$-invariant element $\mean_{\linf{G}}$ of $\linf{G}^\ast$ with $\angles{\onefunction{G},\mean_{\linf{G}}}=1$.

Let~$E$ be a Banach $\lonealgebra$-bimodule, and let $D\colon\lonealgebra\to E^\ast$ be a bounded derivation.
For $x\in E$, define $\phifunction{x,D}{U}:U\mapsto \CC$ by
\[
\phifunction{x,D}{U}(u)=\angles{x,Du\cdot u^{-1}}
\]
for $u\in U$, and put
\[
\fspace{D}{U}=\spanlc\set{\set{\phifunction{x,D}{U} : x\in E}\cup\set{\onefunction{U}}}.
\]

Suppose that there exists a left $U$-invariant element $\mean_{\fspace{D}{U}}$ of ${\bigpars{\fspace{D}{U}}}^\ast$ with $\angles{\onefunction{U},\mean_{\fspace{D}{U}}}=1$.

Then~$D$ is an inner derivation of $\lonealgebra$.
\end{proposition}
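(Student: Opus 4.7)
The plan is to combine the embedding of the group $H=\{u\delta_g:u\in U,\,g\in G\}$ into the invertible elements of $\lonealgebra$, furnished by \cref{res:lonealgebra_contains_semidirect_product}, with the abstract-group innerness criterion of \cref{res:derivation_is_inner_on_generated_group}. As a preliminary step, I would reduce to the case where the $\lonealgebra$-bimodule $E$ is unital, i.e.\ $\delta_e$ acts as the identity on $E$ from both sides; this is a standard reduction for bounded derivations of a unital Banach algebra into a dual bimodule. Concretely, one replaces $E^\ast$ by the closed dual sub-bimodule $\delta_e\cdot E^\ast\cdot\delta_e$ and absorbs a summand of $D$ built from $D(\delta_e)$ into an inner derivation, checking in the process that this respects the space $\fspace{D}{U}$ and the existence of the assumed left $U$-invariant functional $\mean_{\fspace{D}{U}}$ (the functions $\phifunction{x,D}{U}$ change at worst by an element of the original $\fspace{D}{U}$ and an additive constant).

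Once $E$ is unital, \cref{res:lonealgebra_contains_semidirect_product} tells us that $H$ is an abstract group of norm-one invertibles in $\lonealgebra$, generated by the normal subgroup $U$ and by $\{\delta_g:g\in G\}$ (canonically isomorphic to $G$), with its linear span norm-dense in $\lonealgebra$. The restriction $D|_H$ is then a bounded derivation of the abstract group $H$ into the dual Banach $H$-bimodule $E^\ast$, and the left $H$-action on $E$ is unital because $\delta_e\in H$ is the identity of $\lonealgebra$. I would then apply \cref{res:derivation_is_inner_on_generated_group} with $H$ as the ambient group, $\{\delta_g:g\in G\}$ as the subgroup supplying the left-invariant element of $\linf{G}^\ast$ (provided directly by the hypothesis on $G$), and $U$ as the normal subgroup. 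The functions $\phifunction{x,D|_H}{U}$ used by \cref{res:derivation_is_inner_on_generated_group} coincide with the $\phifunction{x,D}{U}$ defined in the present statement, so $\mean_{\fspace{D}{U}}$ is exactly what that proposition needs for its normal subgroup. The conclusion is the existence of $x_0^\ast\in E^\ast$ with $Dh=x_0^\ast\cdot h-h\cdot x_0^\ast$ for every $h\in H$.

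Finally, the inner derivation $a\mapsto x_0^\ast\cdot a-a\cdot x_0^\ast$ is a bounded linear map on $\lonealgebra$ into $E^\ast$ that agrees with $D$ on $H$; by linearity it agrees with $D$ on the norm-dense linear span of $H$, and hence on all of $\lonealgebra$ by continuity. Reversing the preliminary reduction, the original derivation $D$ is inner. The main obstacle I anticipate is the bookkeeping in that preliminary reduction to a unital bimodule: one must check that passing to the corner $\delta_e\cdot E^\ast\cdot\delta_e$ and subtracting the appropriate inner derivation does not destroy the invariant-mean hypothesis on $\fspace{D}{U}$. The remainder of the argument is a direct assembly of the tools developed in \cref{sec:amenability_of_abstract_groups}, with \cref{res:lonealgebra_contains_semidirect_product} providing the bridge from $\lonealgebra$ to the abstract-group setting.
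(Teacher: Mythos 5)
Your proposal follows the paper's proof essentially verbatim: reduce to a unital bimodule, restrict $D$ to the group $H$ of \cref{res:lonealgebra_contains_semidirect_product}, apply \cref{res:derivation_is_inner_on_generated_group} with $U$ as the normal subgroup and (the copy of) $G$ as the other generating subgroup, and conclude by density of the linear span of $H$. The only cosmetic difference is that the paper performs the unitisation reduction one-sidedly (passing to $PE$ with $P$ the left action of $\delta_e$, since \cref{res:core_argument_for_innerness} only requires the left action to be unital) and explicitly verifies that $\fspace{D_P}{U}=\fspace{D}{U}$, which is precisely the bookkeeping point you flag as the main obstacle.
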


\begin{proof}
Assume first that~$E$ is unital as a left $\lonealgebra$-module.

Consider $H\simeq U\rtimes_\alpha G\subset\lonealgebra$. Obviously,~$E$ is a~$H$-bimodule. Since~$H$ is a bounded subset of $\lonealgebra$ (recall that $\norm{h}=1$ for all $h\in H$),~$E$ is, in fact,  a Banach $H$-bimodule. Since~$E$ is unital as a left $\lonealgebra$-module, and the identity elements of $\lonealgebra$ and~$H$ coincide, the left $H$-action on~$E$ is unital.

The derivation~$D$ of $\lonealgebra$ restricts to a derivation of~$H$, again denoted by~$D$. Since~$H$ is a bounded subset of $\lonealgebra$,~$D$ is a bounded derivation of~$U$ with values in $E^\ast$.

We can now apply \cref{res:derivation_is_inner_on_generated_group}, where we replace~$G$ with our present~$H$, $H$ with our present~$G$, and~$N$ with our present~$U$. We conclude from \cref{res:derivation_is_inner_on_generated_group} that~$D$ is inner on our present~$H$. Since by \cref{res:lonealgebra_contains_semidirect_product} the linear span of our present~$H$ is dense in $\lonealgebra$,~$D$ is inner on $\lonealgebra$. This concludes the proof where~$E$ is unital as a left $\lonealgebra$-module.

It remains to cover the case where~$E$ is possibly degenerate as a left $\lonealgebra$-module. The approach for this is more or less standard (cf.\ e.g.\ \cite[Proposition~1.8]{johnson_COHOMOLOGY_IN_BANACH_ALGEBRAS:1972} or \cite[Proposition~0.3]{pier_AMENABLE_BANACH_ALGEBRAS:1988}), but since our criterion applies to one derivation, and is in terms of a specific space of functions on~$U$ that is defined using the \emph{whole} space~$E$, we cannot content ourselves with a reference to a known isomorphism between two cohomology groups, one of which then corresponds to a unital left action on the largest submodule with this property. Hence we include the required steps. Denote the left action of the identity element $\delta_e$ of $\lonealgebra$ on~$E$ by $P$. Then $P$ is a continuous projection with adjoint $P^\ast\colon E^\ast\to E^\ast$. Both are morphisms of $\lonealgebra$-bimodules. The subspace $PE$ is a Banach $\lonealgebra$-bimodule such that the left action of $\lonealgebra$ is unital. In order to describe the dual Banach $\lonealgebra$-bimodule $(PE)^\ast$ of $PE$, define $j\colon P^\ast E^\ast\to (PE)^\ast$ by  $j(x^\ast)=x^\ast\!\!\upharpoonright_{PE}$ for $x^\ast\in P^\ast E^\ast$. Then $j$ is a topological isomorphism of  Banach $\lonealgebra$-bimodules between $P^\ast E^\ast$ and $(PE)^\ast$. Define $D_P\colon\lonealgebra\to(PE)^\ast$ by $j(a)=j(P^\ast Da)$ for $a\in A$. Then $D_P$ is a bounded derivation of $\lonealgebra$ with values in the dual Banach $\lonealgebra$-bimodule $(PE)^\ast$. Moreover, it is easily verified (this is the point) that $\fspace{D_P}{U}=\fspace{D}{U}$. From the result for the left unital case, we know that there exists $\xi^\ast\in (PE)^\ast$ such that $D_P {\texttt{\bf a}}={\texttt{\bf a}}\cdot\xi^\ast-\xi^\ast\cdot {\texttt{\bf a}}$ for all ${\texttt{\bf a}}\in \lonealgebra$. It is then a routine further verification that~$D$ is the inner derivation of $\lonealgebra$ that corresponds to $j^{-1}\xi + (\idmap_{E^\ast}-P^\ast)D\delta_e\in E^\ast$.
\end{proof}

In the unital case, it is now obvious what the natural sufficient condition is for $\lonealgebra$ to be amenable. The following is immediate from \cref{res:derivation_is_inner_on_lonealgebra}.

\begin{theorem}\label{res:lonealgebra_is_amenable_general_case}
Let $\dynsys$ be a \Cstar-dynamical system, where~$A$ is a unital \Calgebra\ with unitary group~$U$, and where~$G$ is a discrete group such that there exists a left $G$-invariant element $\mean_{\linf{G}}$ of $\linf{G}^\ast$ with $\angles{\onefunction{G},\mean_{\linf{G}}}=1$.

For every Banach $\lonealgebra$-bimodule~$E$, every $x\in E$, and every bounded derivation $D\colon\lonealgebra\to E^\ast$, define $\phifunction{x,D}{U}\colon U\mapsto \CC$ by
\[
\phifunction{x,D}{U}(u)=\angles{x,Du\cdot u^{-1}}
\]
for $u \in U$.

Let $\fspace{\mathrm{Der}}{U}$ denote the left $U$-invariant subspace of $\linf{U}$ that is spanned by $\onefunction{U}$ and the functions $\phifunction{x,D}{U}$ corresponding to all Banach $\lonealgebra$-bimodules~$E$, all $x\in E$, and all bounded derivations $D\colon\lonealgebra\to E^\ast$.

Suppose that there exists a left $U$-invariant element $\mean_{\fspace{\mathrm{Der}}{U}}$ of $\bigpars{\fspace{\mathrm{Der}}{U}}^\ast$ with $\angles{\onefunction{U},\mean_{\fspace{}{U}}}=1$. Then $\lonealgebra$ is an amenable Banach algebra.
\end{theorem}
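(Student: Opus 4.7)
The plan is a one-step deduction from \cref{res:derivation_is_inner_on_lonealgebra}, since the hypothesis of this theorem has been formulated precisely to provide, uniformly over all bounded derivations, the input required by that proposition.

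First, by the definition of amenability, it suffices to fix an arbitrary Banach $\lonealgebra$-bimodule $E$ together with an arbitrary bounded derivation $D\colon\lonealgebra\to E^\ast$, and to show that $D$ is inner. The subspace $\fspace{D}{U}$ from \cref{res:derivation_is_inner_on_lonealgebra} is, by construction, a linear subspace of $\fspace{\mathrm{Der}}{U}$: its generators $\phifunction{x,D}{U}$ for $x\in E$ appear among those of $\fspace{\mathrm{Der}}{U}$ as the instance of the indexing set corresponding to the pair $(E,D)$, and both spaces contain $\onefunction{U}$.

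Second, I would set $\mean_{\fspace{D}{U}} := \mean_{\fspace{\mathrm{Der}}{U}}\!\upharpoonright_{\fspace{D}{U}}$, an element of $\bigpars{\fspace{D}{U}}^\ast$. Since $\fspace{D}{U}$ was shown in the first part of \cref{res:core_argument_for_innerness} to be left $U$-invariant (via the cocycle-like identity \cref{eq:basic_relation}), every translate $\laction{u}{U}\f$ of an element $\f\in\fspace{D}{U}$ still lies in the domain of $\mean_{\fspace{\mathrm{Der}}{U}}$, so the left $U$-invariance of the latter functional transfers to the restriction. Plainly $\angles{\onefunction{U},\mean_{\fspace{D}{U}}}=1$ as well.

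Finally, an appeal to \cref{res:derivation_is_inner_on_lonealgebra} with this $\mean_{\fspace{D}{U}}$ yields that $D$ is inner; since $D$ was arbitrary, $\lonealgebra$ is amenable. There is no substantive obstacle here. The role of this theorem is purely organisational: it packages the uniform-across-derivations hypothesis into a single sufficient condition, from which amenability follows by one application of the preceding proposition. The real content lies upstream, in \cref{res:core_argument_for_innerness,res:core_argument_for_mean,res:derivation_is_inner_on_generated_group,res:derivation_is_inner_on_lonealgebra}.
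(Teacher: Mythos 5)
Your proposal is correct and matches the paper exactly: the paper gives no separate proof, stating only that the theorem is immediate from \cref{res:derivation_is_inner_on_lonealgebra}, and your restriction of $\mean_{\fspace{\mathrm{Der}}{U}}$ to the left $U$-invariant subspace $\fspace{D}{U}\subset\fspace{\mathrm{Der}}{U}$ is precisely the routine verification being left to the reader.
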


It remains to identify a substantial class of unital \Calgebras\ such that $\mean_{\fspace{\mathrm{Der}}{U}}$ as in \cref{res:lonealgebra_is_amenable_general_case} exists.  As we shall now proceed to show, the strongly amenable \Calgebras\ are such a class.

Let~$A$ be a unital \Calgebra\ with unitary group~$U$. Let $\mathrm{Bil}(A)$ be the space of all bounded bilinear forms on~$A$. Following \cite{paterson:1991}, for $V\in\mathrm{Bil}(A)$, we define $\Delta(V)\colon U\to\CC$ by $\Delta(V)(u)=V(u^{-1},u)$. Let $\fspace{\mathrm{Bil}}{U}=\spanlc\set{\Delta V : V\in \mathrm{Bil}(A)}$; this space is denoted by $B(A)$ in \cite{paterson:1991}. Then $\fspace{\mathrm{Bil}}{U}$ is a subspace of $\linf{U}$ containing the constants (consider $(a_1,a_2)\mapsto a^\ast(a_1a_2)$ for $a^\ast\in A^\ast$ such that $a^\ast(1_A)=1)$, and it is easy to see that it is left and right $U$-invariant; see \cite [p.~557]{paterson:1991}.  It can then be shown (see \cite[Theorem~2]{paterson:1991}) that~$A$ is strongly amenable if and only if there exists a right $U$-invariant element $\mean_{\fspace{\mathrm{Bil}}{U}}$ of $\bigpars{{\fspace{\mathrm{Bil}}{U}}}^\ast$ such that $\mean_{\fspace{\mathrm{Bil}}{U}}(1)=1=\norm{\mean_{\fspace{\mathrm{Bil}}{U}}}$.

Actually, the strong amenability of a unital \Calgebra~$A$ is also equivalent to the existence of a left (which is what we need) $U$-invariant element $\mean_{\fspace{\mathrm{Bil}}{U}}$ of $\bigpars{\fspace{\mathrm{Bil}}{U}}^\ast$ such that $\mean_{\fspace{\mathrm{Bil}}{U}}(1)=1=\norm{\mean_{\fspace{\mathrm{Bil}}{U}}}$. We shall now show this. For $\f\in\linf{U}$, define $\check{\f}\in\linf{U}$ by $\f(u)=\f(u^{-1})$ for $u\in U$. For $V\in\mathrm{Bil}(A)$, define $\check V\in\mathrm{Bil}(A)$ by $\check V(a_1,a_2)=V(a_2,a_1)$ for $a_1,a_2\in A$. Then $(\Delta V)^{\check{}}=\Delta(\check V)$ for $V\in\mathrm{Bil}(A)$. We see from this that the map $\f\mapsto \check \f$ is an isometric linear automorphism of the normed space $\fspace{\mathrm{Bil}}{U}$ and this, in turn, shows how to obtain left $U$-invariant continuous linear functionals from right $U$-invariant ones, and vice versa.

With \cref{res:lonealgebra_is_amenable_general_case} and the above discussion of Paterson's result available, the rest is now easy.

\begin{theorem}\label{res:lonealgebra_is_amenable_if_A_is_strongly_amenable}
Let $\dynsys$ be a \Cstar-dynamical system, where~$A$ is a strongly amenable not necessarily unital \Calgebra, and~$G$ is a discrete group such that there exists a left $G$-invariant element $\mean_{\linf{G}}$ of $\linf{G}^\ast$ with $\angles{\onefunction{G},\mean_{\linf{G}}}=1$.

Then $\lonealgebra$ is an amenable Banach algebra.
\end{theorem}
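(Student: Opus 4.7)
The plan is to reduce to the unital case and then apply \cref{res:lonealgebra_is_amenable_general_case} after verifying its hypothesis by means of Paterson's characterisation of strongly amenable unital \Calgebras.

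For the reduction, if~$A$ is not unital, I would invoke the fact that its unitisation~$A^+$ is, by definition, a strongly amenable unital \Calgebra, and that~$\alpha$ extends canonically to an action~$\alpha^+$ of~$G$ on~$A^+$. The algebra $\lonealgebra$ then sits inside $\ell^1(G,A^+;\alpha^+)$ as a closed two-sided ideal with a bounded approximate identity, and, as already indicated in the introductory overview, the combination of \cite[Corollary~2.3.11]{runde_LECTURES_ON_AMENABILITY:2002} with \cite[Lemma~2.2]{de_jeu_el_harti_pinto:2017b} transfers amenability from $\ell^1(G,A^+;\alpha^+)$ down to $\lonealgebra$. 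So I may assume from the outset that~$A$ is unital with unitary group~$U$.

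The core step is then to show that $\fspace{\mathrm{Der}}{U}\subseteq\fspace{\mathrm{Bil}}{U}$. Given any Banach $\lonealgebra$-bimodule~$E$, $x\in E$, and bounded derivation $D\colon\lonealgebra\to E^\ast$, I would view~$A$ as a closed $^\ast$-subalgebra of $\lonealgebra$ via $a\mapsto a\delta_e$ and define $V\colon A\times A\to\CC$ by
\[
V(a_1,a_2)=\angles{x,Da_2\cdot a_1}.
\]
Then~$V$ is a bounded bilinear form on~$A$, and a direct unpacking gives
\[
\Delta(V)(u)=V(u^{-1},u)=\angles{x,Du\cdot u^{-1}}=\phifunction{x,D}{U}(u)
\]
for every $u\in U$. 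Since $\onefunction{U}\in\fspace{\mathrm{Bil}}{U}$ as well, the desired inclusion follows.

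Finally, since~$A$ is strongly amenable, the left-invariant version of Paterson's theorem recalled immediately before the statement furnishes a left $U$-invariant element $\mean_{\fspace{\mathrm{Bil}}{U}}$ of $\bigpars{\fspace{\mathrm{Bil}}{U}}^\ast$ with $\angles{\onefunction{U},\mean_{\fspace{\mathrm{Bil}}{U}}}=1$; restricting this functional to $\fspace{\mathrm{Der}}{U}$ produces precisely the left $U$-invariant element $\mean_{\fspace{\mathrm{Der}}{U}}$ of $\bigpars{\fspace{\mathrm{Der}}{U}}^\ast$ with $\angles{\onefunction{U},\mean_{\fspace{\mathrm{Der}}{U}}}=1$ that is demanded by \cref{res:lonealgebra_is_amenable_general_case}, which therefore yields the amenability of $\lonealgebra$. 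The only point requiring care is the reduction to the unital case, since strong amenability is defined via the unitisation and one must move amenability of the crossed product back down to the closed ideal; the central computational step — the bilinear form identification above — is really only an unpacking of definitions, because all the substantive work has been absorbed into \cref{res:lonealgebra_is_amenable_general_case} and Paterson's theorem.
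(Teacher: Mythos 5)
Your proposal is correct and takes essentially the same route as the paper's proof: reduce to the unital case via the unitisation together with \cite[Corollary~2.3.11]{runde_LECTURES_ON_AMENABILITY:2002} and \cite[Lemma~2.2]{de_jeu_el_harti_pinto:2017b}, then verify the inclusion $\fspace{\mathrm{Der}}{U}\subset\fspace{\mathrm{Bil}}{U}$ by exhibiting each $\phifunction{x,D}{U}$ as $\Delta(V)$ for a bounded bilinear form, and finish with the left-invariant form of Paterson's theorem applied to $\fspace{\mathrm{Bil}}{U}$ and restricted to $\fspace{\mathrm{Der}}{U}$. Your bilinear form $V(a_1,a_2)=\angles{x,Da_2\cdot a_1}$ is literally the paper's $V^{x,D}(a_1,a_2)=\angles{a_1\cdot x,Da_2}$ written on the other side of the duality pairing.
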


\begin{proof}
As a consequence of \cite[Lemma~2.2]{de_jeu_el_harti_pinto:2017b} and the definition of strong amenability, we may assume that~$A$ is a strongly amenable unital \Calgebra.

In view of \cref{res:lonealgebra_is_amenable_general_case} and the discussion preceding the present theorem, we need only show that $\fspace{\mathrm{Der}}{U}\subset\fspace{\mathrm{Bil}}{U}$. So let~$E$ be a Banach $\lonealgebra$-bimodule, let $x\in E$, let $D\colon\lonealgebra\to E^\ast$ be a bounded derivation, and consider the associated function $\phifunction{x,D}{U}:U\mapsto \CC$, defined by
\[
\phifunction{x,D}{U}(u)=\angles{x,Du\cdot u^{-1}}
\]
for $u\in U$. Define the bounded bilinear form $V^{x,D}$ on~$A$ by $V^{x,D}(a_1,a_2)=\angles{a_1 \cdot x, Da_2}$ for $a_1, a_2\in A$. Then $\phifunction{x,D}{U}=\Delta (V^{x,D})$.
\end{proof}

It was already noted in Section~\ref{sec:introduction_and_overview} that all Type I (equivalently: all postliminal) \Calgebras\ are strongly amenable. Therefore, \cref{res:lonealgebra_is_amenable_if_A_is_strongly_amenable} implies that $\lonealgebra$ is amenable if~$A$ is a commutative or finite dimensional \Calgebra, which is \cite[Theorem~2.4]{de_jeu_el_harti_pinto:2017b}.

\section{Converses}\label{sec:converses}

We shall now briefly discuss possible converse implications: if $\dynsys$ is a \Cstar-dynamical system where~$G$ is a discrete group, and if $\lonealgebra$ is an amenable Banach algebra, then what can one say about~$G$ or~$A$? The following modest result sums up the results in this vein that the authors are aware of.

\begin{proposition}\label{res:converse}
Let $(G, A,\alpha)$ be a \Cstar-dynamical system where~$G$ is a discrete group. Suppose that $\lonealgebra$ is amenable. Then:
\begin{enumerate}
\item The algebra~$A$ is amenable;
\item If $A=\CC$, then~$G$ is an amenable group;
\item If $A=\cont(X)$ for a compact Hausdorff space~$X$, then the action of~$G$ on~$X$ is amenable.
\end{enumerate}
\end{proposition}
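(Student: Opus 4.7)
The strategy for all three parts is to exploit the fact, recalled in Section~\ref{sec:introduction_and_overview}, that $A\rtimes_\alpha G$ is the enveloping \Calgebra\ of $\lonealgebra$, with contractive dense-image inclusion of the latter in the former. By \cite[Proposition~2.3.1]{runde_LECTURES_ON_AMENABILITY:2002}, amenability of $\lonealgebra$ passes to $A\rtimes_\alpha G$, and by the Connes--Haagerup equivalence this is the same as nuclearity of $A\rtimes_\alpha G$. The three claims will then follow from standard facts about nuclear \Calgebras, consistent with the description in the surrounding section of converses that ``can be derived via the detour of the enveloping \Calgebra''.

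For part~(1), I would invoke the canonical conditional expectation. Since nuclearity passes to quotients, the quotient $A\rtimes_\alpha^r G$ of $A\rtimes_\alpha G$ is nuclear as well. Because~$G$ is discrete, the map $\sum_g a_g\delta_g\mapsto a_e$ extends to a faithful contractive conditional expectation $A\rtimes_\alpha^r G\to A$ onto the \Calgebra~$A$. Nuclearity is inherited by the range of a conditional expectation (via the standard completely positive approximation property argument), so~$A$ is nuclear, hence amenable by Connes--Haagerup in the other direction.

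For part~(2), the specialisation $A=\CC$ forces $\alpha$ to be trivial, so $\lonealgebra=\ell^1(G)$; amenability of $\ell^1(G)$ is equivalent to amenability of~$G$ by the theorem of Johnson and Ringrose already cited in the introduction. For part~(3), taking $A=\cont(X)$ with~$X$ compact Hausdorff, the detour yields nuclearity of $\cont(X)\rtimes_\alpha G$ and hence of the quotient $\cont(X)\rtimes_\alpha^r G$. Anantharaman-Delaroche's characterisation then identifies nuclearity of the reduced crossed product of a discrete group action on a commutative \Calgebra\ with topological amenability of that action, finishing the argument.

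The common obstacle, and the reason for the modesty the authors signal about this proposition, is that the entire route passes through the enveloping \Calgebra\ and so discards the specifically Banach-algebraic information present in $\lonealgebra$. A direct proof of part~(1) would be more satisfying, and the natural candidate is the bounded, contractive $A$-bimodule projection $P\colon\lonealgebra\to A$ sending~$\texttt{\bf a}$ to~$a_e$; however,~$P$ is not an algebra homomorphism, so the usual hereditary criteria for amenability of Banach algebras do not apply in a routine way. I would expect any \emph{direct} $\upL^1$-type converse to encounter its principal difficulty precisely here, and I would not attempt it without new ideas beyond what is developed in the preceding sections.
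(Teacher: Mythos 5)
Your proposal is correct and takes essentially the same route as the paper: pass from $\lonealgebra$ to its enveloping \Calgebra\ $A\rtimes_\alpha G$ via \cite[Proposition~2.3.1]{runde_LECTURES_ON_AMENABILITY:2002}, descend to the reduced crossed product, and then use the conditional expectation onto~$A$ for part~(1), Ringrose's converse to Johnson's theorem for part~(2), and the Anantharaman-Delaroche/Brown--Ozawa characterisation for part~(3). The only cosmetic difference is that you translate the \Cstar-steps into the language of nuclearity via Connes--Haagerup, whereas the paper phrases them in terms of amenability while citing the same results.
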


For the definition of an amenable~$G$-action on~$X$ we refer to \cite[Definition~4.3.5]{brown_ozawa_C-STAR-ALGEBRAS_AND_FINITE-DIMENSIONAL_APPROXIMATIONS:2008}.

We note that it is not asserted in the first part that~$A$ is strongly amenable. In view of the logical dependence between \cref{res:lonealgebra_is_amenable_general_case} and \cref{res:lonealgebra_is_amenable_if_A_is_strongly_amenable} this is, perhaps, also not to be expected. After all, since\textemdash in the notation of Section~\ref{sec:amenability_of_lonealgebra}\textemdash there seems to be no reason why the inclusion $\fspace{\mathrm{Der}}{U}\subset\fspace{\mathrm{Bil}}{U}$ could not be proper, one would, perhaps, expect that there are unital \Calgebras\ that are not strongly amenable, but for which \cref{res:lonealgebra_is_amenable_general_case} still applies. On the other hand, in the realm of amenability of groups there are some surprising implications between the existence of invariant functionals on various function spaces, and we cannot entirely exclude the possibility that the conclusion in the first part of \cref{res:converse} can still be strengthened.

\begin{proof}
We prove the first statement. The crossed product \Calgebra\ $A\rtimes_\alpha G$ is the enveloping \Calgebra\ of the involutive Banach algebra $\lonealgebra$. By the very construction of such an enveloping \Calgebra, there is a continuous (even contractive) homomorphism of $\lonealgebra$ into $A\rtimes_\alpha G$ with dense range. Since $\lonealgebra$ is amenable, \cite[Proposition 2.3.1]{runde_LECTURES_ON_AMENABILITY:2002} implies that $A\rtimes_\alpha G$ is amenable. Therefore, the reduced crossed product $A\rtimes_{\alpha,\mathrm{r}} G$, which is a quotient of $A\rtimes_\alpha G$, is also amenable. Since there exists a conditional expectation from~$A\rtimes_{\alpha,\mathrm{r}} G$ onto~$A$ (see e.g.\ \cite[Proposition~4.1.9]{brown_ozawa_C-STAR-ALGEBRAS_AND_FINITE-DIMENSIONAL_APPROXIMATIONS:2008}), this implies that~$A$ is amenable (see e.g.\ \cite[Exercise~2.3.3]{brown_ozawa_C-STAR-ALGEBRAS_AND_FINITE-DIMENSIONAL_APPROXIMATIONS:2008}).

The second statement is, of course, just a special case of Ringrose's converse to Johnson's theorem; see \cite[Theorem~2.5]{johnson_COHOMOLOGY_IN_BANACH_ALGEBRAS:1972}.

For the third statement, one sees as for the first statement that $\cont(X)\rtimes_{\alpha, \mathrm r}G$ is amenable. Then the conclusion follows from \cite[Theorem~4.4.3]{brown_ozawa_C-STAR-ALGEBRAS_AND_FINITE-DIMENSIONAL_APPROXIMATIONS:2008}.
\end{proof}

The proofs of the first and third statements rely heavily on some non-trivial results for \Calgebras. It is also possible to deduce the second statement in this fashion. As before, if $\ell^1(G)$ is amenable, then so is ${\mathrm C}^\ast_{\mathrm r}(G)$. Since~$G$ is discrete, \cite[Theorem~4.2]{lance:1973} then shows that~$G$ is amenable. Compared to this, Ringrose's argument for the amenability of~$G$ is definitely more direct, and it would be desirable to have a more direct argument for the first and third statements as well.

We do not know if, for non-trivial~$A$, the amenability of $\lonealgebra$ implies the amenability of~$G$. An attempt to derive such results via the enveloping \Calgebra\ (as above) does not only seem overly complicated, but may also be the wrong approach, since the passage to the enveloping \Calgebra\ simplifies the structure. For example, $\upL^1(G)$ has a non-selfadjoint closed ideal for every abelian non-compact locally compact Hausdorff topological group (see \cite[Theorem~7.7.1]{rudin_FOURIER_ANALYSIS_ON_GROUPS:1962}), but this is of course no longer true for ${\mathrm C}^\ast(G)$. In a similar vein, and directly related to amenability, we make the following observation in the non-discrete case. It is known (this goes back to \cite{ehrenpreis_mautner:1955}) that $\mathrm{SL}(2,\RR)$ is not amenable. Hence $\upL^1(\mathrm{SL}(2,\RR))$ is not amenable either. However, ${\mathrm C}^\ast(\mathrm{SL}(2,\RR))$ is amenable (see~\cite{connes:1976} for the amenability of every separable connected Lie group, and \cite[p.~228]{paterson:1987} for the general possibly non-separable case). The information that the group is non-amenable is thus lost (or is at least no longer reflected in the amenability of the algebra) when passing from $\upL^1(\mathrm{SL}(2,\RR))$ to ${\mathrm C^\ast}(\mathrm{SL}(2,\RR))$.  Such a lack of information in $A\rtimes_\alpha G$ can also occur for non-trivial~$A$. Indeed, the Cuntz \Calgebra\ $\mathcal{O}_2$ on two generators is amenable, but it was noted independently by Kumjian and Archbold (see  \cite[p.~83]{rordam_stormer_CLASSIFICATION_OF_NUCLEAR_C-STAR-ALGEBRAS:2002} for details) that $\mathcal{O}_2$ can be obtained as a \Calgebra\ crossed product of the continuous functions on the Cantor set $\{0,1\}^{\mathbb{N}}$ by the non-amenable discrete group $\mathbb{Z}_2\ast\mathbb{Z}_3$. Here, again, the information that~$G$ is not amenable is not present in $A\rtimes_\alpha G$, or is at least not reflected in its amenability properties.

It seems, therefore, at least conceivable that $\lonealgebra$ still contains enough information to be able to infer some kind of amenability property of~$G$ (or~$A$) if $\lonealgebra$ itself is amenable, and it also seems that proper $\upL^1$-type arguments will have to be developed in order to investigate such Ringrose-type converses.


\subsection*{Acknowledgements}
We thank Narutaka Ozawa and Jun Tomiyama for helpful comments.
The last author was partially funded by FCT/Portugal through project UID/MAT/04459/2013.




\bibliography{general_bibliography}


\end{document}